\numberwithin{equation}{section}
\theoremstyle{plain}
\newtheorem{thm}{Theorem}[section]
\newtheorem{prop}[thm]{Proposition}
\newtheorem{lem}[thm]{Lemma}
\theoremstyle{definition}
\theoremstyle{remark}
\newcommand{\Lin}{\mathcal{L}}
\newcommand\N{{\mathbb N}}
\newcommand\pref[1]{(\ref{#1})}
\let \eps\varepsilon
\DeclareMathOperator{\id}{id}
\def\<#1,#2>{\left<#1,#2\right>}
\newcommand\FF{{\cal F}}
\newcommand\tT{{\widetilde {T}}}
\newcommand\tTet{\widetilde {\theta}}
\title{A differential approach to the multi-marginal Schr\"{o}dinger system}
\author{Guillaume Carlier\thanks{CEREMADE, UMR CNRS 7534, Universit\'e Paris IX
Dauphine, Pl. de Lattre de Tassigny, 75775 Paris Cedex 16, FRANCE and INRIA-Paris, MOKAPLAN,
\texttt{carlier@ceremade.dauphine.fr}}
\and
Maxime Laborde \thanks{Department of Mathematics and Statistics, McGill University, Montreal, CANADA,
\texttt{maxime.laborde@mcgill.ca}}
}
\begin{document}

\maketitle

\begin{abstract}
We develop an elementary and self-contained differential approach, in an $L^{\infty}$ setting, for well-posedness (existence, uniqueness and smooth dependence with respect to the data) for the multi-marginal Schr\"{o}dinger system which arises in the entropic regularization of optimal transport problems.

\end{abstract}

\textbf{Keywords:} Multi-marginal Schr\"{o}dinger system, local and global inverse function theorems, entropy minimization.

\medskip

\textbf{MS Classification:} 45G15, 49K40.

\section{Introduction}\label{sec-intro} 
 
 Multi-marginal optimal transport problems arise in various applied settings such as economics, quantum chemistry, Wasserstein barycenters... Contrary to the well-developed two-marginals theory (see the textbooks of Villani \cite{VilON, Villani-TOT2003} and  Santambrogio \cite{FilOptimal}), the structure of solutions of such problems is far from being well-understood in general. This explains the need for good numerical/approximation methods  among which the entropic approximation (which has its roots in the seminal paper of  Schr\"{o}dinger  \cite{schrodinger1932}) method plays a distinguished role both for its simplicity and its efficiency, see Cuturi \cite{Cut}, Benamou et al. \cite{Ben}. Roughly speaking, as its name indicates, the entropic approximation strategy consists in approximating the initial optimal transport problem by the minimization of a relative entropy with respect to the Gibbs kernel associated to the transport cost.  Rigorous $\Gamma$-convergence results as well as dynamic formulations for the quadratic transport cost were studied in particular by L\'eonard, see \cite{leonard2012schrodinger}, \cite{leonard2013survey} and the references therein.

 \smallskip 
 
 At least formally, joint measures that minimize a relative entropy subject to marginal constraints have a very simple structure, their density is the reference kernel multiplied by the tensor product of potentials (which we will call Schr\"{o}dinger potentials) which are  constrained by the prescribed marginal conditions.  However, the existence and regularity of Schr\"{o}dinger potentials cannot be taken for granted as a direct consequence of Lagrange duality because of constraints qualification issues.  The problem at stake is a system of nonlinear integral equations where the data are the kernel and the marginals and the unknowns are the Schr\"{o}dinger potentials. In the two-marginals case, there is a very elegant contraction argument for the Hilbert projective metric which shows the well-posedness of this system, see in particular Borwein, Lewis and Nussbaum \cite{BoLeNu}. This contraction argument is constructive and gives linear-convergence of the Sinkhorn algorithm which consists in solving alternatively the two integral equations of the system.   It is not obvious to us though whether this approach can be extended to the multi-marginal case (for which existence results exist but, apart from the case of finitely supported measures, rely on rather involved and abstract arguments, see for instance Borwein and Lewis \cite{BoLe}). Our goal is to give an elementary differential proof of the well-posedness of the Schr\"{o}dinger system in an $L^{\infty}$ setting.

 \smallskip
 
 This short paper is organized as follows. Section \ref{sec-prel} is devoted to  the presentation of the multi-marginal Schr\"{o}dinger system and its variational interpretation. Section \ref{sec-invert} deals with local invertibility whereas section \ref{sec-wellpos} is devoted to global invertibility and well-posedness. Section \ref{sec-furth} gives some further properties of the Schr\"{o}dinger map.

\section{Preliminaries}\label{sec-prel} 

\subsection{Data and assumptions}

We are given an integer $N\ge 2$, $N$ probability spaces $(X_i, \FF_i, m_i)$, $i=1, \ldots, N$ and set 
\begin{equation}\label{product}
X:=\prod_{i=1}^N X_i,  \FF:=\bigotimes_{i=1}^N \FF_i, \; m:=\bigotimes_{i=1}^N m_i.
\end{equation}
Given $i\in \{1, \ldots, N\}$,  we will denote by $X_{-i}:=\prod_{j \neq i}^N X_j$, $m_{-i}:=\bigotimes_{j\neq i}^N m_j$   and will always identify $X$ to $X_i\times X_{-i}$ i.e. will denote $x=(x_1, \ldots, x_N)\in X$ as $x=(x_i, x_{-i})$.

We shall denote by $L^{\infty}_{++}(X_i, \FF_i, m_i)$ (respectively $L^{\infty}_{++}(X, \FF, m)$) the interior of the positive cone of  $L^{\infty}(X_i, \FF_i, m_i)$ (respectively $L^{\infty}(X, \FF, m)$) and consider a kernel $K\in L^{\infty}_{++}(X, \FF, m)$ as well as densities $\mu_i \in L^{\infty}_{++}(X_i, \FF_i, m_i)$ with the same total mass:
\begin{equation}\label{balance}
\int_{X_i} \mu_i \mbox{d} m_i= \int_{X_j} \mu_j \mbox{d} m_j, \;   i, j \in \{1, \ldots, N\}.
\end{equation}
Our aim is to show the well-posedness of the multi-marginal Schr\"{o}dinger system: find   potentials $\varphi_i$ in $L^{\infty}(X_i, \FF_i, m_i)$ (called Schr\"{o}dinger potentials) such that for every $i$ and $m_i$-almost every $x_i\in X_i$ one has:
\begin{equation}\label{schrosys}
\mu_i(x_i)= e^{\varphi_i(x_i)} \int_{X_{-i}} K(x_i, x_{-i}) e^{\sum_{j\neq i} \varphi_j(x_j)}   \mbox{d} m_{-j}(x_{-j}). 
\end{equation}
Clearly if $\varphi=(\varphi_1, \ldots , \varphi_N)$ solves \pref{schrosys} so does every family of potentials of the form $(\varphi_1+\lambda_1, \ldots , \varphi_N+ \lambda_N)$ where the $\lambda_i$'s are constants with zero-sum, it is therefore natural to add as a normalization conditions  to \pref{schrosys} the additional $N-1$ linear equations:
\begin{equation}\label{normaliz}
\int_{X_i} \varphi_i \mbox{d} m_i=0, \; i=1, \ldots, N-1. 
\end{equation}

\subsection{Variational interpretation}

It is worth here recalling the origin of the Schr\"{o}dinger system in terms of minimization problems with multi-marginal constraints. Given $\mu=(\mu_1, \cdots, \mu_N)\in \prod_{i=1}^N  L^{\infty}_{++}(X_i, \FF_i, m_i)$ satisfying \pref{balance},  consider the entropy minimization problem
\begin{equation}\label{minient}
\inf_{q \in \Pi(\mu)} H(q\vert K m)
\end{equation}
where $\Pi(\mu)$ is the set of measures on $X$ having marginals $(\mu_1 m_1, \ldots, \mu_N m_N)$ (the nonemptyness of this set being guaranteed by \pref{balance}), $Km$ denotes the measure (equivalent to $m$) having density $K$ with respect to $m$ and $H$ denotes the relative entropy:
\[H(q\vert K m):=\begin{cases} \int_X \Big( \log\Big( \frac{1}{K} \frac{ d q}{d m} \Big) -1\Big) \mbox{d} q  \mbox{ if $q \ll m $}\\ + \infty \mbox{ otherwise}.   \end{cases} \]
A motivation for \pref{minient} is the following, when $K=e^{-\frac{c}{\eps}}$ is the Gibbs kernel associated to some cost function $c$ and $\eps>0$ is a small (temperature) parameter, then \pref{minient} is an approximation of the multi-marginal optimal transport problem which consists in finding a measure in $\Pi(\mu)$ making the average of the cost $c$ minimal (see \cite{leonard2012schrodinger}, \cite{leonard2013survey}, \cite{CDPS}).

\smallskip

At least formally, \pref{minient} is dual to the concave unconstrained maximization problem
\begin{equation}\label{dual}
\sup_{\varphi=(\varphi_1, \ldots, \varphi_N)} \sum_{i=1}^N \int_{X_i} \varphi_i \mu_i \mbox{d} m_i -\int_X K(x) e^{\sum_{j=1}^N \varphi_j(x_j)} \mbox{d} m(x)
\end{equation}
and if $\varphi\in \prod_{i=1}^N L^{\infty}(X_i, \FF_i, m_i)$ solves \pref{dual} (the point is that the existence of such a  minimizer cannot be taken for granted) it is a critical point of the (differentiable) functional in \pref{dual} which exactly leads to the Schr\"{o}dinger system \pref{schrosys}. Moreover interpreting such a $\varphi$ as a family of Lagrange multipliers associated to the marginal constraints in \pref{minient} leads to the guess that the solution $q$ of \pref{minient} should be of the form $q=\gamma m$ with a density kernel $\gamma$ of the form
\begin{equation}\label{ansatz}
\gamma(x_1, \ldots, x_N) =K(x_1, \ldots, x_N) e^{\sum_{j=1}^N \varphi_j(x_j)}
\end{equation}
and the requirement that $q\in \Pi(\mu)$ also leads to \pref{schrosys}.  Of course, by concavity,  if $\varphi$ is a bounded solution of \pref{schrosys} it is a maximizer of \pref{dual} and $q=\gamma m$ given by \pref{ansatz} solves \pref{minient}.

 \section{Local invertibility}\label{sec-invert}
 
 Let us define
 \[E:=\left\{\varphi:=(\varphi_1, \ldots, \varphi_N)\in \prod_{i=1}^N L^{\infty}(X_i, \FF_i, m_i) : \; \int_{X_i} \varphi_i \mbox{d} m_i=0, \; i=1, \ldots, N-1\right\}\]
 which, equipped with the $L^{\infty}$ norm, is a Banach space.  For $\varphi =(\varphi_1, \ldots, \varphi_N) \in  \prod_{i=1}^N L^{\infty}(X_i, \FF_i, m_i)$ define $T(\varphi)=(T_1(\varphi), \ldots, T_N(\varphi))\in  \prod_{i=1}^N L^{\infty}(X_i, \FF_i, m_i)$ by
 \begin{equation}
T_i(\varphi)(x_i):=\int_{X^{-i}} K(x_i, x_{-i}) e^{\sum_{j=1}^N \varphi_j(x_j) }\mbox{d} m_{-i}(x_{-i}).
 \end{equation}
 Note that  $T(E)=T(\prod_{i=1}^N L^{\infty}(X_i, \FF_i, m_i) )\subset F_{++}$ where 
 \begin{equation}\label{F++}
 F_{++}:=F\cap  \prod_{i=1}^N L^{\infty}_{++}(X_i, \FF_i, m_i), 
 \end{equation}
and 
 \begin{equation}\label{F}
F:=\left\{\mu \in \prod_{i=1}^N L^{\infty}(X_i, \FF_i, m_i)\; : \int_{X_1} \mu_1 dm_1=\ldots=\int_{X_N} \mu_N dm_N \right\}. 
\end{equation} 
It will also be convenient to define the map $\tT=(\tT_1, \ldots, \tT_N)$ by $\tT_i(\varphi):=\log(T_i(\varphi))$ for $\varphi =(\varphi_1, \ldots, \varphi_N) \in  \prod_{i=1}^N L^{\infty}(X_i, \FF_i, m_i)$ i.e.
\begin{equation}\label{deftT}
\tT_i(\varphi)(x_i):= \varphi_i(x_i) + \log\Big(\int_{X^{-i}} K(x_i, x_{-i}) e^{\sum_{j\neq i} \varphi_j(x_j) }\mbox{d} m_{-i}(x_{-i})  \Big).
\end{equation}

Let us then observe that both $\tT$ and $T$ are of class $C^{\infty}$, more precisely for $\varphi$ and $h$ in  $\prod_{i=1}^N L^{\infty}(X_i, \FF_i, m_i)$, we have
 \[\tT'_i(\varphi)(h)(x_i)=h_i(x_i) + \frac{ \int_{X_{-i}} K(x_i, x_{-i}) e^{\sum_{k\neq i} \varphi_k(x_k) } \sum_{j\neq i} h_j(x_j) \mbox{d} m_{-i}(x_{-i})   }{\int_{X_{-i}} K(x_i, x_{-i}) e^{\sum_{j\neq i} \varphi_j(x_j) }\mbox{d} m_{-i}(x_{-i}) }  \]
 and
 \begin{equation}\label{manu}
 T'_i(\varphi)(h)(x_i)=e^{\tT_i(\varphi)(x_i)} \tT'_i(\varphi)(h)(x_i).
 \end{equation}
 
Let us fix $\varphi:=(\varphi_1, \ldots, \varphi_N)\in E$, observe that $\tT'(\varphi)$ extends (and we still denote by $\tT'(\varphi)$ this extension) to a bounded linear self map of  $\prod_{i=1}^N L^{2}(X_i, \FF_i, m_i)$ which is of the form
\begin{equation}
\tT'(\varphi):=\id+ L
\end{equation}
with $L$ a \emph{compact}  linear self map of  $\prod_{i=1}^N L^{2}(X_i, \FF_i, m_i)$. We then have the following:

\begin{prop}\label{local}
Let $\varphi \in E$ then $T'(\varphi)$ is an isomorphism between $E$ and $F$. In particular,  $T$ is a local $C^{\infty}$ diffeomorphism between $E$ and $F$, and $T(E)$ is open in $F_{++}$. 

\end{prop}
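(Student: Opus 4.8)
The plan is to reduce the statement to a Fredholm‑type analysis of the differential on the natural $L^2$ spaces, and then to bootstrap the $L^2$ information back to the $L^{\infty}$ setting.

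\emph{Step 1 (rewriting the differential).} Fix $\varphi\in E$, put $\gamma:=Ke^{\sum_{j=1}^N\varphi_j}\in L^{\infty}_{++}(X,\FF,m)$ and $\rho_i:=T_i(\varphi)$; then $\rho_i$ is the density with respect to $m_i$ of the $i$‑th marginal of $\gamma m$, and since $\gamma$ is bounded above and below by positive constants and each $m_i$ is a probability, so is $\rho_i$. Starting from \pref{manu}, I would first record the clean identities
\[T_i'(\varphi)(h)(x_i)=\int_{X_{-i}}\gamma(x_i,x_{-i})\,s(h)(x)\,\mbox{d} m_{-i}(x_{-i})=\rho_i(x_i)\,\mathbb{E}_{P}\big[s(h)\mid x_i\big],\]
where $s(h):=\sum_{j=1}^N h_j$ and $P:=\gamma m/\int_X\gamma\,\mbox{d} m$; equivalently $T_i'(\varphi)(h)=\rho_i\,\tT_i'(\varphi)(h)$. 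Multiplication by $\rho_i$ is an isomorphism of $L^{\infty}(X_i,\FF_i,m_i)$ and of $L^2(X_i,\FF_i,m_i)$, so on $\mathbf H:=\prod_{i=1}^N L^2(X_i,\FF_i,m_i)$ the extension of $T'(\varphi)$ is $M(\id+L)$ with $M$ an isomorphism of $\mathbf H$; in particular it has closed range. I would also note that $\int_{X_i}T_i'(\varphi)(h)\,\mbox{d} m_i=\int_X\gamma\,s(h)\,\mbox{d} m$ for every $i$, so $T'(\varphi)$ maps $\prod_i L^{\infty}$ boundedly into $F$ (hence $E$ into $F$) and $\mathbf H$ into $\mathbf F:=\{g\in\mathbf H:\ \int_{X_i}g_i\,\mbox{d} m_i\text{ is independent of }i\}$; and that $\langle T'(\varphi)h,g\rangle_{\mathbf H}=\int_X\gamma\,s(h)\,s(g)\,\mbox{d} m$, so $T'(\varphi)$ is self‑adjoint (and nonnegative) on $\mathbf H$.

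\emph{Step 2 (the kernel — the crux).} I would then compute $\ker\big(T'(\varphi)|_{\mathbf H}\big)$. If $T'(\varphi)(h)=0$ then $\mathbb{E}_P[s(h)\mid x_i]=0$ for every $i$, so the identity
\[\mathbb{E}_P\big[s(h)^2\big]=\sum_{j=1}^N\mathbb{E}_P\big[h_j\,\mathbb{E}_P[s(h)\mid x_j]\big]=0\]
forces $s(h)=0$ $P$‑a.e., hence (since $\gamma$ is bounded away from $0$, so $P\sim m$) $\sum_j h_j(x_j)=0$ $m$‑a.e.; integrating this in all variables but the $i$‑th shows $h_i$ equals a.e.\ a constant $c_i$, and then $\sum_i c_i=0$. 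Thus $\ker\big(T'(\varphi)|_{\mathbf H}\big)=\{(c_1,\dots,c_N)\in\R^N:\ \sum_i c_i=0\}$, of dimension $N-1$.

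\emph{Step 3 ($L^2$ isomorphism and bootstrap).} Since $T'(\varphi)|_{\mathbf H}$ is self‑adjoint with closed range, $\mathrm{Range}\big(T'(\varphi)|_{\mathbf H}\big)=\ker\big(T'(\varphi)|_{\mathbf H}\big)^{\perp}$, and a short computation identifies the latter as $\mathbf F$. The closed subspace $\mathbf E:=\{h\in\mathbf H:\ \int_{X_i}h_i\,\mbox{d} m_i=0,\ i=1,\dots,N-1\}$ meets $\ker\big(T'(\varphi)|_{\mathbf H}\big)$ only at $0$ and has codimension $N-1$, hence $\mathbf H=\mathbf E\oplus\ker\big(T'(\varphi)|_{\mathbf H}\big)$ and $T'(\varphi)$ restricts to a bounded bijection, thus an isomorphism, $\mathbf E\to\mathbf F$. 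To come back to $L^{\infty}$: given $\psi\in F\subset\mathbf F$, let $h\in\mathbf E$ be its unique preimage; from $\tT_i'(\varphi)(h)=\psi_i/\rho_i$ one gets
\[h_i(x_i)=\frac{\psi_i(x_i)}{\rho_i(x_i)}-\mathbb{E}_P\Big[\textstyle\sum_{j\neq i}h_j\;\Big|\;x_i\Big](x_i),\]
where the first term is bounded ($\rho_i\geq\ei_X\gamma>0$) and the second obeys $\big|\mathbb{E}_P[\sum_{j\neq i}h_j\mid x_i]\big|\le\|\gamma\|_{\infty}(\ei_X\gamma)^{-1}\sum_{j\neq i}\|h_j\|_{L^2(m_j)}$ by Cauchy--Schwarz ($m_j$ a probability); hence $h_i\in L^{\infty}$ and $h\in E$. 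Combined with injectivity of $T'(\varphi)$ on $E$ (immediate from Step 2 together with the normalisation $\int_{X_i}\varphi_i\,\mbox{d} m_i=0$ for $i<N$), this shows that $T'(\varphi):E\to F$ is a continuous bijection between Banach spaces ($F$ being a closed subspace of $\prod_i L^{\infty}(X_i,\FF_i,m_i)$), hence an isomorphism by the open mapping theorem.

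\emph{Step 4 (conclusion) and the main difficulty.} Finally, $T$ maps $E$ into $F_{++}\subset F$ and, being the restriction/corestriction to the closed subspaces $E$ and $F$ of the $C^{\infty}$ map on $\prod_i L^{\infty}$, is $C^{\infty}$ as a map $E\to F$; since $DT(\varphi)=T'(\varphi)$ is an isomorphism for every $\varphi\in E$, the Banach inverse function theorem gives that $T$ is a local $C^{\infty}$ diffeomorphism between $E$ and $F$. In particular $T$ is an open map, so $T(E)$ is open in $F$; as $T(E)\subset F_{++}$ and $F_{++}$ is open in $F$, $T(E)$ is open in $F_{++}$. I expect the delicate part to be Step 2 — the conditional‑expectation orthogonality trick and the elementary fact that a sum of one‑variable functions vanishing $m$‑a.e.\ is made of constants (where boundedness below of $\gamma$ is needed to pass from $P$‑a.e.\ to $m$‑a.e.) — together with the $L^2\!\to\!L^{\infty}$ bootstrap of Step 3; these two are what actually link the Hilbert‑space Fredholm argument to the $L^{\infty}$ well‑posedness, everything else being routine.
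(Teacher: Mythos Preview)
Your proof is correct and follows essentially the same strategy as the paper: compute the kernel of the differential on $L^2$ via the ``square the sum'' identity $\mathbb{E}_P[s(h)^2]=\sum_j\mathbb{E}_P[h_j\,\mathbb{E}_P[s(h)\mid x_j]]$, invoke Fredholm theory for the compact perturbation $\id+L$, and bootstrap from $L^2$ to $L^{\infty}$ using that $L$ (equivalently, the conditional expectation) maps $L^2$ into $L^{\infty}$. The one cosmetic difference is that you exploit the self-adjointness of $T'(\varphi)$ on $\prod_i L^2(m_i)$ to identify its range directly as $\ker^{\perp}=\mathbf F$, whereas the paper instead uses the Fredholm codimension count together with the inclusion $\tT'(\varphi)(L^2)\subset F_{\varphi,2}$ obtained by differentiating the mass-balance identity; your route is slightly cleaner but not materially different.
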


\begin{proof}
In view of \pref{manu}, the desired invertibility claim amounts to show that $\tT'(\varphi)$ is an isomorphism between $E$ and $F_{\varphi}$ the linear subspace of codimension $N-1$  consisting of  $\theta=(\theta_1, \ldots, \theta_N)\in \prod_{i=1}^N L^{\infty}(X_i, \FF_i, m_i)$ which satisfy
\begin{equation}\label{normavecfie}
\int_{X_1} e^{\tT_1(\varphi)}\theta_1 \mbox{d} m_1=\ldots=\int_{X_N} e^{\tT_N(\varphi)}\theta_N \mbox{d} m_N.
\end{equation}
Let us also denote by $F_{\varphi, 2}$ the set  of all $\theta=(\theta_1, \ldots, \theta_N)\in \prod_{i=1}^N L^{2}(X_i, \FF_i, m_i)$ which satisfy \pref{normavecfie}.

As noted above, one can write $\tT'(\varphi)=\id+L$ on $\prod_{i=1}^N L^{2}(X_i, \FF_i, m_i)$ with $L$ compact. Let us define the probability measure $Q_{\varphi}$ on $X$ given by
\begin{equation}\label{defQfi}
Q_{\varphi}(\mbox{d} x)=\frac{ K(x) e^{\sum_{j=1}^N \varphi_j(x_j)}  m (\mbox{d} x)}{ \int_X K(x) e^{\sum_{j=1}^N \varphi_j(x_j)} \mbox{d} m(x)}.
\end{equation}
For $i=1, \ldots, N$, let us now disintegrate $Q_{\varphi}$ with respect to its $i$-th marginal $Q_{\varphi}^i$:
\begin{equation}\label{dezinglrm}
Q_{\varphi}( \mbox{d} x_i,  \mbox{d} x_{-i}) = Q_{\varphi}^{-i}( \mbox{d} x_{-i} \vert x_i) \otimes Q_{\varphi}^i(\mbox{d} x_i) 
\end{equation}
where $Q_{\varphi}^{-i}( \mbox{d} x_{-i} \vert x_i)$ is the conditional probability of $x_{-i}$ given $x_i$ according to $Q_{\varphi}$. The compact operator $L$ can then conveniently be expressed in terms of the corresponding conditional expectations operators. Indeed,  setting $L(h)=(L_1(h), \ldots, L_N(h))$, we obviously have
\[L_i(h)(x_i)=\int_{X_{-i}}  \Big( \sum_{j\neq i}  h_j(x_j) \Big) Q^{-i}_{\varphi}( \mbox{d} x_{-i} \vert x_i) \mbox{  for $m_i$-a.e. $x_i\in X_i$}.\]
Let $h\in \prod_{i=1}^N L^{2}(X_i, \FF_i, m_i)$ be such that   $\tT'(\varphi)(h)=0$ (equivalently $T'(\varphi)(h)=0$) i.e. for every $i$ and $m_i$-a.e. $x_i\in X_i$, there holds
\[h_i(x_i)=-\int_{X_{-i}}  \Big( \sum_{j\neq i}  h_j(x_j) \Big) Q^{-i}_{\varphi}( \mbox{d} x_{-i} \vert x_i)\]
multiplying by $h_i(x_i)$ and then integrating with respect to $Q_{\varphi}^i$ gives
\[ \int_{X_i} h_i^2(x_i) \mbox{d} Q_{\varphi}^i( x_i)=- \sum_{j,\, j\neq i} \int_X h_i(x_i) h_j(x_j) \mbox{d} Q_{\varphi}(x)\]
summing over $i$ thus yields
\[\begin{split}
\int_X \Big(\sum_{i=1}^N h_i(x_i)\Big)^2  \mbox{d} Q_{\varphi}( x)&= \sum_{i=1}^N  \int_{X_i} h_i^2(x_i) \mbox{d} Q_{\varphi}^i( x_i)
+  \sum_{i,j,\,j\neq i} \int_X h_i(x_i) h_j(x_j) \mbox{d} Q_{\varphi}(x)\\
&=0.
\end{split}\]
Since $Q_{\varphi}$ is equivalent to $m$, we deduce that $\sum_{i=1}^N h_i(x_i)=0$ $m$-a.e. that is $h$ is constant and its components sum to $0$. Hence $\ker(\tT'(\varphi))$ has dimension $N-1$ and $\ker(\tT'(\varphi))\cap E=\{0\}$ i.e. $\tT'(\varphi)$ is one to one on $E$.

\smallskip

Since $L$ is a compact operator of $L^2$ and $\ker(\id+L)$ has dimension $N-1$, it follows from the Fredholm alternative Theorem (see chapter VI of \cite{brezis}) that $R(\id+L)$ has codimension $N-1$. Differentiating the relation
\[\int_{X_i} e^{\tT_i(\varphi)} \mbox{d} m_i=\int_{X_j} e^{\tT_j(\varphi)} \mbox{d} m_j, \; i, j \in \{1, \ldots, N-1\}\]
gives 
\[\int_{X_i} e^{\tT_i(\varphi)}  \tT'_i(\varphi) (h) \mbox{d} m_i=\int_{X_j} e^{\tT_j(\varphi)} \tT'_j(\varphi) (h)   \mbox{d} m_j, \; i, j \in \{1, \ldots, N-1\}\]
i.e. $\tT'(\varphi)(h) \in F_{\varphi}$ for every $h\in  \prod_{i=1}^N L^{\infty}(X_i, \FF_i, m_i)$. Likewise, we also have $\tT'(\varphi)(h)\in F_{\varphi,2}$, for every  $h\in  \prod_{i=1}^N L^{2}(X_i, \FF_i, m_i)$.   Since $F_{\varphi,2}$ has codimension $N-1$, we  get
\begin{equation}
R(\id+L)=\tT'(\varphi) \Big( \prod_{i=1}^N L^{2}(X_i, \FF_i, m_i)\Big)=F_{\varphi,2}.
\end{equation}
In particular, for every $\theta\in F_{\varphi}$ there exists $h\in \prod_{i=1}^N L^{2}(X_i, \FF_i, m_i)$ such that $\theta= h+ L(h)$ since obviously $L$ maps $\prod_{i=1}^N L^{2}(X_i, \FF_i, m_i)$ into $\prod_{i=1}^N L^{\infty}(X_i, \FF_i, m_i)$ we have $h\in \prod_{i=1}^N L^{\infty}(X_i, \FF_i, m_i)$. Finally, since $\tT'(\varphi)(h)=\tT'(\varphi)(\widetilde{h})$ whenever $h-\widetilde{h}$ is a vector of constants summing to zero, we may also assume that $h\in E$. This shows that $\tT'(\varphi)(E)=F_{\varphi}$ or equivalenty $T'(\varphi)(E)=F$.

\smallskip

We have shown that $T'(\varphi)$ is an isomorphism between the Banach spaces $E$ and $F$, the local invertibility claim thus directly follows from the local inversion Theorem.

\end{proof}

 \section{Global invertibility and well-posedness}\label{sec-wellpos}
 
 To pass from local to global invertibiliy of $T$, we invoke classical arguments \`a la Caccioppoli-Hadamard (see for instance \cite{KP}). First of all, it is easy to see  that $T$ is one to one on $E$:

 \begin{prop}\label{onetoone}
 The map $T$ is injective on $E$.
 
 \end{prop}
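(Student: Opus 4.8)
The plan is to reduce injectivity of $T$ to the elementary convexity inequality $e^t\ge 1+t$, exploiting the fact that, by the very definition of $T_j$, the quantity $T_j(\varphi)$ is the density with respect to $m_j$ of the $j$-th marginal of the positive measure $K(x)\,e^{\sum_k\varphi_k(x_k)}\,m(\mbox{d}x)$ on $X$.

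Concretely, I would fix $\varphi,\psi\in E$ with $T(\varphi)=T(\psi)=:\mu$, set $h_j:=\psi_j-\varphi_j$ (so $h=(h_1,\ldots,h_N)\in E$, since $\int_{X_i}h_i\,\mbox{d}m_i=0$ for $i\le N-1$), and aim at $h=0$. Introduce the two finite positive measures on $X$
\[\nu:=K(x)\,e^{\sum_{k=1}^N\varphi_k(x_k)}\,m(\mbox{d}x),\qquad \nu':=K(x)\,e^{\sum_{k=1}^N\psi_k(x_k)}\,m(\mbox{d}x)=e^{\sum_{k=1}^N h_k(x_k)}\,\nu.\]
By definition of $T_j$ and Fubini, the $j$-th marginal of $\nu$ has density $T_j(\varphi)=\mu_j$, and the $j$-th marginal of $\nu'$ has density $T_j(\psi)=\mu_j$, with respect to $m_j$; integrating over $X_j$ gives in particular $\nu(X)=\int_{X_j}\mu_j\,\mbox{d}m_j=\nu'(X)$. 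Hence, comparing total masses,
\[\int_X\Big(e^{\sum_k h_k(x_k)}-1\Big)\,\mbox{d}\nu=0,\qquad \int_X\Big(e^{-\sum_k h_k(x_k)}-1\Big)\,\mbox{d}\nu'=0.\]

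Next I would use $e^t-1\ge t$. Because the $j$-th marginals of $\nu$ and of $\nu'$ coincide (both equal $\mu_j\,m_j$), one has
\[\int_X\sum_k h_k(x_k)\,\mbox{d}\nu=\int_X\sum_k h_k(x_k)\,\mbox{d}\nu'=\sum_{k=1}^N\int_{X_k}h_k\,\mu_k\,\mbox{d}m_k=:S,\]
so the two displayed identities give $0\ge S$ and $0\ge -S$, i.e. $S=0$. Therefore $\int_X\big(e^{\sum_k h_k(x_k)}-1-\sum_k h_k(x_k)\big)\,\mbox{d}\nu=0$; the integrand is nonnegative and $\nu$ is equivalent to $m$, so it vanishes $m$-a.e., and since $e^t-1-t=0$ only for $t=0$ we conclude $\sum_{k=1}^N h_k(x_k)=0$ for $m$-a.e. $x\in X$.

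Finally, exactly as in the proof of Proposition~\ref{local}, a function of the separated form $x\mapsto\sum_k h_k(x_k)$ vanishing $m$-a.e. forces each $h_k$ to be $m_k$-a.e. equal to a constant $\lambda_k$ with $\sum_k\lambda_k=0$; the normalization $\int_{X_i}h_i\,\mbox{d}m_i=0$ ($i=1,\ldots,N-1$), valid since $\varphi,\psi\in E$, then yields $\lambda_i=0$ for $i\le N-1$, hence $\lambda_N=0$, so $h=0$. I do not expect any genuine obstacle here — everything is a direct computation — the only mildly delicate point being this last separation-of-variables step, which however is the same one already dispatched in Proposition~\ref{local}. Equivalently, the computation above just says that the two relative entropies of $Q_\varphi=\nu/\nu(X)$ and $Q_\psi=\nu'/\nu'(X)$ with respect to each other vanish, i.e. that $T(\varphi)=T(\psi)$ makes $\varphi$ and $\psi$ two maximizers of the dual functional \pref{dual} for the marginals $\mu$, which is strictly concave on $E$.
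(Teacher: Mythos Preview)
Your argument is correct and, at bottom, the same as the paper's: the paper simply observes that $T(\varphi)=T(\psi)=\mu$ makes both $\varphi$ and $\psi$ maximizers of the dual functional \pref{dual}, which is strictly concave in $\sum_j \varphi_j$, forcing $\sum_j \varphi_j=\sum_j \psi_j$ and hence (by the normalization defining $E$) $\varphi=\psi$. Your computation with $e^t\ge 1+t$ is precisely an explicit unpacking of this strict-concavity step --- as you yourself note in your final sentence --- so there is no genuinely different idea here, only a more self-contained presentation.
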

 
 \begin{proof}
 If $\varphi$ and $\psi$ are in $E$ and $T(\varphi)=T(\psi):=\mu$, then both $\varphi$ and $\psi$ are solutions of the maximization problem \pref{dual}, since the functional in  \pref{dual} is the sum of a linear term and a term that is strictly concave in the direct sum of the potentials we should have $\sum_{i=1}^N \varphi_i(x_i) =\sum_{i=1}^N \psi_i(x_i)$ which by the normalization condition in the definition  of $E$ implies that $\varphi=\psi$.

 \end{proof}
 
 Next we observe that:
 
 \begin{lem}\label{proper}
 $T(E)$ is closed in $F_{++}$. 
  \end{lem}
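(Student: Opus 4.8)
Given a sequence $\mu^n\in T(E)$ converging in $F_{++}$ to some $\mu\in F_{++}$, the plan is to pick preimages $\varphi^n\in E$ with $T(\varphi^n)=\mu^n$ and to produce $\varphi\in E$ with $T(\varphi)=\mu$. The crucial first step is the a priori estimate that $(\varphi^n)_n$ stays bounded in $\prod_{i=1}^N L^\infty(X_i,\FF_i,m_i)$. Since $\mu\in F_{++}$ there are $0<c\le C$ with $c\le\mu_i\le C$ $m_i$-a.e., hence $c/2\le\mu_i^n\le 2C$ $m_i$-a.e. for $n$ large; writing $0<k_-\le K\le k_+$ and $a_j^n:=\int_{X_j}e^{\varphi_j^n}\,dm_j$, the identity $\mu_i^n(x_i)=e^{\varphi_i^n(x_i)}\int_{X_{-i}}K\,e^{\sum_{j\neq i}\varphi_j^n}\,dm_{-i}$ together with $m_{-i}=\bigotimes_{j\neq i}m_j$ gives, $m_i$-a.e.,
\[
\frac{c}{2k_+\prod_{j\neq i}a_j^n}\ \le\ e^{\varphi_i^n(x_i)}\ \le\ \frac{2C}{k_-\prod_{j\neq i}a_j^n}.
\]
Integrating against $m_i$ and using $\prod_{j\neq i}a_j^n=\big(\prod_j a_j^n\big)/a_i^n$ bounds $\prod_j a_j^n$ between two positive constants; reinserting this into the displayed estimate bounds the oscillation $\ess\varphi_i^n-\ei\varphi_i^n$ uniformly in $n$ and $i$. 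The normalization $\int_{X_i}\varphi_i^n\,dm_i=0$ then yields a uniform bound on $\|\varphi_i^n\|_\infty$ for $i\le N-1$, and $a_N^n=\big(\prod_j a_j^n\big)/\prod_{i\le N-1}a_i^n$ (the factors being now controlled) yields one for $\|\varphi_N^n\|_\infty$ as well.

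With the $\varphi_i^n$ bounded in $L^\infty\subset L^2(X_i,\FF_i,m_i)$, reflexivity of $L^2$ and the Eberlein--\v{S}mulian theorem provide a subsequence with $\varphi^n\weakto\varphi$ weakly in $\prod_i L^2$; as the $L^\infty$-ball of radius $\sup_n\|\varphi^n\|_\infty$ is convex and strongly closed, hence weakly closed, in $L^2$, and the constraints $\int_{X_i}\varphi_i\,dm_i=0$ are weakly continuous, we get $\varphi\in E$. To identify $\varphi$ I would invoke the variational picture of Section~\ref{sec-prel}: since $T(\varphi^n)=\mu^n$, each $\varphi^n$ is a bounded solution of the Schr\"odinger system with marginals $\mu^n$, hence, by concavity, maximizes over $E$ the functional
\[
J^n(\psi):=\sum_{i=1}^N\int_{X_i}\psi_i\mu_i^n\,dm_i-\int_X K\,e^{\sum_j\psi_j}\,dm .
\]
Let $J$ be defined in the same way with $\mu_i$ in place of $\mu_i^n$. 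Its linear part is weakly continuous on $\prod_i L^2$, while $\psi\mapsto\int_X K\,e^{\sum_j\psi_j}\,dm$ is convex and, by Fatou's lemma, lower semicontinuous for the strong (hence weak) topology of $\prod_i L^2$, so $J$ is weakly upper semicontinuous. Moreover $|J^n(\psi)-J(\psi)|\le\sum_i\|\psi_i\|_\infty\|\mu_i^n-\mu_i\|_{L^1}\to0$ uniformly for $\psi$ in the bounded set containing the $\varphi^n$, whence $J(\varphi^n)=\sup_E J^n+o(1)\ge J^n(\psi)+o(1)$ for every fixed $\psi\in E$. Letting $n\to\infty$, $J(\varphi)\ge\limsup_n J(\varphi^n)\ge J(\psi)$ for all $\psi\in E$, i.e.\ $\varphi$ maximizes $J$ over $E$.

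Finally, $E$ being a linear subspace and $J$ of class $C^\infty$, the optimality condition $J'(\varphi)(h)=0$ for all $h\in E$ reads $\sum_i\int_{X_i}h_i\,(\mu_i-T_i(\varphi))\,dm_i=0$. Taking $h$ with only its $N$-th component nonzero (always admissible in $E$) gives $T_N(\varphi)=\mu_N$; taking, for $i\le N-1$, $h$ with only its $i$-th component nonzero and $\int_{X_i}h_i\,dm_i=0$ gives $\mu_i-T_i(\varphi)\equiv\rho_i$ for a constant $\rho_i$. Since $\int_{X_i}T_i(\varphi)\,dm_i=\int_X K\,e^{\sum_j\varphi_j}\,dm$ is independent of $i$ and $\mu\in F$ forces the masses $\int_{X_i}\mu_i\,dm_i$ to coincide, every $\rho_i$ vanishes, so $T(\varphi)=\mu$ and $\mu\in T(E)$, which proves the lemma. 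I expect the a priori $L^\infty$ bound to be the main obstacle — it is precisely where the product structure of $m$ and the normalization \pref{normaliz} enter — together with the fact that one cannot pass to the limit directly in the nonlinear identity $T(\varphi^n)=\mu^n$ (weak $L^2$ convergence of $\varphi^n$ does not control $e^{\varphi_j^n}$), which is why the argument is routed through the concave dual functional and weak upper semicontinuity.
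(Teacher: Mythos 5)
Your proof is correct, but it identifies the limit by a genuinely different mechanism than the paper. The a priori $L^\infty$ bound is the common core: you exploit the tensor structure of $m_{-i}$ to pin down $\prod_j\int_{X_j}e^{\varphi_j^n}\,dm_j$ and hence the oscillations of each $\varphi_i^n$, then use the zero-mean normalization; the paper instead renormalizes so that $\int_{X_i}e^{\psi_i^n}\,dm_i=1$ for $i\le N-1$ and reads the bounds off the equation for $\mu_N^n$ first. Both arguments are elementary and rest on the same ingredients ($K$ and $\mu^n$ bounded above and away from $0$, product structure of $m$). Where you really diverge is in passing to the limit. The paper stays with the nonlinear integral equation: it extracts weak-$*$ limits of $q_{-i}^n=e^{\sum_{j\ne i}\psi_j^n}$ in $L^\infty(X_{-i})$, observes that $\int K q_{-i}^n\,dm_{-i}$ then converges pointwise, and uses the identity $\psi_i^n=\log\mu_i^n-\log\big(\int K q_{-i}^n\,dm_{-i}\big)$ to upgrade to $m_i$-a.e.\ convergence of $\psi_i^n$ itself, after which the limit equation is immediate. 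You instead route the identification through the concave dual functional: each $\varphi^n$ maximizes $J^n$, the perturbation $J^n\to J$ is uniform on $L^\infty$-bounded sets, $J$ is weakly upper semicontinuous on $\prod_i L^2$ (linear part weakly continuous, exponential part convex and strongly lsc, hence weakly lsc), so the weak limit $\varphi$ maximizes $J$ over $E$, and the Euler--Lagrange equation on $E$ together with the mass balance recovers $T(\varphi)=\mu$. Your route correctly circumvents the fact that weak $L^2$ convergence of $\varphi^n$ gives no direct control on $e^{\varphi_j^n}$, at the price of invoking the variational picture of Section \ref{sec-prel} (which the paper states and which is easily justified by concavity, so this is legitimate); the paper's route is more self-contained but relies on the slightly delicate pointwise-convergence bootstrap. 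Both are complete proofs of the lemma.
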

 
 \begin{proof}
 Let $(\varphi^n)_n\in E^{\N}$ be such that $\mu^n:=T(\varphi^n)$ converges in $L^{\infty}$ to some $\mu\in F_{++}$. Let $\psi^n=(\varphi_1^n+ \lambda_1^n, \ldots, \varphi_N^n+\lambda_N^n)$ where the $\lambda_i^n$'s are constant which sum to zero and chosen in such a way that
 \begin{equation}\label{altnormaliz}
 \int_{X_i} e^{\psi_i^n} \mbox{d} m_i=1, \; i=1, \ldots, N-1,
 \end{equation}
 this ensures that  $\mu^n:=T(\psi^n)$ i.e. for every $i$ and $m_i$- a.e. $x_i\in X_i$
 \begin{equation}\label{benallaenculemanu}
 \log(\mu_i^n(x_i))=\psi_i^n(x_i)+ \log\Big( \int_{X_{-i}} K(x_i, x_{-i}) q_{-i}^n (x_{-i}) \mbox{d} m_{-i}(x_{-i})   \Big)
 \end{equation}
 where
 \[q_{-i}^n(x_{-i}):=e^{\sum_{j\neq i} \psi_j^n(x_j)}.\]
 Since $(\mu_N^n)_n$ is uniformly bounded and bounded away from $0$ and so is $K$, we deduce that $(e^{\psi_N^n})_n$ is bounded and bounded away from $0$ in $L^{\infty}$ i.e. $(\psi_N^n)_n$ is bounded in $L^{\infty}(X_N, \FF_N, m_N)$. From this $L^{\infty}$ bound on $(\psi_N^n)_n$, the fact that $K\in L_{++}^{\infty}(X, \FF, m)$ and the uniform bounds from above and from below on $\mu_i^n$, we deduce that $\psi_i^n$ is bounded in $L^\infty$ for $i=1, \ldots, N-1$. In particular, taking subsequences if necessary, we may assume that  for every $i$, $(q_{-i}^n)_n$ converges weakly $*$ in $L^\infty(X_{-i}, \FF_{-i}, m_{-i})$ to some $q_{-i}$, in particular $\int_{X_{-i}} K(x_i, x_{-i}) q_{-i}^n (x_{-i}) \mbox{d} m_{-i}(x_{-i})$ converges for $m_i$-a.e. $x_i$ to $\int_{X_{-i}} K(x_i, x_{-i}) q_{-i} (x_{-i}) \mbox{d} m_{-i}(x_{-i})$. But since $\log(\mu_i^n)$ converges in $L^{\infty}(X_i, \FF_i, m_i)$ to $\log(\mu_i)$, we deduce from \pref{benallaenculemanu} that $\psi_i^n$ converges $m_i$-a.e. (and also in $L^p$ for every $p\in [1, +\infty)$ by Lebesgue's dominated convergence Theorem) to some $\psi_i \in L^{\infty}$. Passing to the limit in  \pref{benallaenculemanu}, we then have $\mu=T(\psi)$ or equivalently $\mu=T(\varphi)$ for $\varphi \in E$ such that $\varphi-\psi$ is constant. This shows that $T(E)$ is closed in $F_{++}$.

 \end{proof}

 We are now in position to state our main result:
 
 \begin{thm}\label{invglothm}
 
 For every $\mu\in F_{++}$, the multi-marginal Schr\"{o}dinger system \pref{schrosys} admits a unique solution $\varphi=S(\mu)\in E$, moreover  $S \in C^{\infty}(F_{++}, E)$.

 \end{thm}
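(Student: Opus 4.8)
The plan is to combine the three facts already established — local diffeomorphism (Proposition \ref{local}), injectivity (Proposition \ref{onetoone}), and closedness of the image (Lemma \ref{proper}) — into a global inversion statement via a connectedness argument of Caccioppoli--Hadamard type. Concretely, $T$ maps $E$ into $F_{++}$; I would show that $T(E)$ is simultaneously open and closed in $F_{++}$, and that $F_{++}$ is connected, hence $T(E)=F_{++}$. Openness of $T(E)$ in $F_{++}$ is exactly the content of Proposition \ref{local} (since $T$ is a local diffeomorphism onto an open subset of $F$, and its image lies in $F_{++}$, which is open in $F$). Closedness in $F_{++}$ is Lemma \ref{proper}. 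For connectedness of $F_{++}$: it is a convex subset of $F$ (the intersection of the linear space $F$ with the product of the open positive cones $L^\infty_{++}$, both convex), hence path-connected. Therefore $T(E)$, being nonempty, open and closed in the connected set $F_{++}$, equals $F_{++}$; this gives existence for every $\mu\in F_{++}$. Uniqueness is Proposition \ref{onetoone}.

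Next I would upgrade surjectivity+injectivity to a global $C^\infty$ diffeomorphism. Since $T:E\to F_{++}$ is a bijection which is, by Proposition \ref{local}, a local $C^\infty$ diffeomorphism at every point (with $T'(\varphi)$ an isomorphism of Banach spaces for each $\varphi$), the inverse map $S:=T^{-1}$ is automatically $C^\infty$: locally around any $\mu=T(\varphi)$ the inverse agrees with the smooth local inverse furnished by the local inversion theorem, so $S\in C^\infty(F_{++},E)$ with $S'(\mu)=T'(S(\mu))^{-1}$. Finally I would translate back to the system: $T(\varphi)=\mu$ means $T_i(\varphi)(x_i)=\mu_i(x_i)$, i.e. $e^{-\varphi_i(x_i)}\mu_i(x_i)=\int_{X_{-i}}K(x_i,x_{-i})e^{\sum_{j\neq i}\varphi_j(x_j)}\,\mbox{d}m_{-i}$, which is precisely \pref{schrosys}; and $\varphi\in E$ encodes the normalization \pref{normaliz}. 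Thus $\varphi=S(\mu)$ is the unique solution in $E$.

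I expect the only genuinely delicate point to be making sure the three ingredients dovetail on the right ambient space. In particular one must be careful that ``$T(E)$ open in $F_{++}$'' plus ``$T(E)$ closed in $F_{++}$'' is a statement relative to $F_{++}$ as a topological space in its own right (which is legitimate since $F_{++}$ is connected), and that $F_{++}$ is indeed nonempty — this is where the balance condition \pref{balance}, assumed throughout, is used, together with $K\in L^\infty_{++}$, to exhibit at least one point of the image (e.g. $T(0)\in F_{++}$, since $K>0$ bounded away from $0$ forces each $T_i(0)$ to be bounded and bounded away from $0$, and the $T_i(0)$ automatically share a common mass $\int_X K\,\mbox{d}m$). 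Everything else — smoothness of $S$, passage to the Schr\"odinger system — is a routine unwinding of definitions and the local inversion theorem, requiring no new estimates.
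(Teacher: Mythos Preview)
Your proposal is correct and follows exactly the same approach as the paper: openness of $T(E)$ from Proposition~\ref{local}, closedness from Lemma~\ref{proper}, connectedness of $F_{++}$ by convexity, hence $T(E)=F_{++}$; then bijectivity via Proposition~\ref{onetoone} and smoothness of $S=T^{-1}$ from the local inversion theorem. Your write-up is somewhat more detailed (explicitly checking nonemptiness of $T(E)$ and unwinding the correspondence with \pref{schrosys}), but the argument is identical in substance.
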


 \begin{proof}
 It follows from Proposition \ref{local} that $T(E)$ is open in $F_{++}$ and Lemma \ref{proper} ensures it is closed in $F_{++}$, since $F_{++}$ is connected (it is actually convex) we deduce that $T(E)=F_{++}$. Together with Proposition \ref{onetoone} this implies that $T$ is a bijection between $E$ and $F_{++}$, the smoothness claim then follows from Proposition \ref{local}.  
 \end{proof}

\section{Further properties of the Schr\"{o}dinger map}\label{sec-furth} 

From now on, we refer to  the smooth map $S=T^{-1}$ : $F_{++}  \to E$ from Theorem \ref{invglothm} as the Schr\"{o}dinger map. Our aim now is to study the (local) Lipschitz behavior of $S$. 
 Given $M  \ge 1$ we define
\begin{equation}
F_{++, M}:= \{\mu \in F_{++} \: : \; \frac{1}{M} \le \mu_i \le M \mbox{ $m_i$-a.e.}\}. 
\end{equation}

Let us start with an elementary a priori bound:

\begin{lem}\label{collombpd}
For every $M\ge 1$ there is a constant $R_{M}$ such that $S(F_{++,M})$ is included in the ball of radius $R_M$ of   $\prod_{i=1}^N L^{\infty}(X_i, \FF_i, m_i)$.
\end{lem}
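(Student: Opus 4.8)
The plan is to derive the $L^\infty$ bound directly from the Schrödinger system \pref{schrosys}, exploiting the two-sided bounds on $\mu_i$ and on the kernel $K$. Write $\varphi = S(\mu) \in E$, so that for every $i$ and $m_i$-a.e. $x_i$,
\[
\varphi_i(x_i) = \log \mu_i(x_i) - \log\Big( \int_{X_{-i}} K(x_i, x_{-i}) e^{\sum_{j\neq i} \varphi_j(x_j)} \mbox{d} m_{-i}(x_{-i}) \Big).
\]
First I would establish an $L^\infty$ bound on $\varphi_N$, following the same device as in the proof of Lemma \ref{proper}: since $\int_{X_i} \varphi_i \mbox{d} m_i = 0$ for $i=1,\ldots,N-1$, Jensen's inequality applied to $e^{\varphi_j}$ against the probability measure $m_j$ gives $\int_{X_j} e^{\varphi_j} \mbox{d} m_j \ge 1$ for those indices, hence the integral term in the equation for $i=N$ is bounded below by $(\ei K)\prod_{j\neq N}\int_{X_j}e^{\varphi_j}\mbox{d} m_j \ge \ei K > 0$; combined with the upper bound on $\mu_N$ this yields $\varphi_N \le \log M - \log(\ei K)$ a.e. For the lower bound on $\varphi_N$ one integrates the equation for $i=N$ against $m_N$, uses $\int_{X_N}\mu_N\mbox{d} m_N = \int_{X_1}\mu_1\mbox{d} m_1 \le M$ and the fact that, by the balance and normalization, the full integral $\int_X K e^{\sum_j \varphi_j}\mbox{d} m$ is squeezed between constants depending only on $M$, $\ess K$, $\ei K$; this forces $\int_{X_N} e^{\varphi_N}\mbox{d} m_N$ to be bounded above, and then a second use of Jensen (now on $-\varphi_N$, or directly bounding the essential infimum via the equation with the just-obtained upper bound on $\varphi_N$) gives $\varphi_N \ge -C_M$.

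Once $\norm{\varphi_N}_{L^\infty(X_N)} \le C_M$ is in hand, I would bootstrap to the remaining potentials exactly as in Lemma \ref{proper}: for each $i \le N-1$, the integral term in the equation for $\varphi_i$ is now bounded above and below by positive constants depending only on $M$, $\ei K$, $\ess K$ and $C_M$ (the sum in the exponent over $j\neq i$ contains $\varphi_N$, which is controlled, and the other $\varphi_j$, $j\le N-1$; to close the loop one argues by a finite induction, or simply notes that each such integral is bounded below by $\ei K \cdot \prod_{j\neq i}\int_{X_j}e^{\varphi_j}\mbox{d} m_j$ using Jensen on the zero-mean components and the explicit bound on the $\varphi_N$-factor, and bounded above after inserting the upper bounds obtained step by step). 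Hence $\abs{\varphi_i} \le \abs{\log\mu_i} + C_M' \le \log M + C_M'$ a.e. Collecting all $i$ and setting $R_M := \log M + C_M''$ for a suitable constant $C_M''$ depending only on $M$, $\ei K$ and $\ess K$ gives the claim.

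The main obstacle is the lower bound on $\varphi_N$ (equivalently, ruling out that the common value of $\int_{X_i}e^{\varphi_i}\mbox{d} m_i$ blows up while its reciprocal stays bounded): Jensen only gives one-sided control for the zero-mean components, so the upper bound on $\int_{X_N}e^{\varphi_N}\mbox{d} m_N$ must be extracted by integrating the whole system and using the mass-balance constraint \pref{balance} to pin down $\int_X K e^{\sum_j\varphi_j}\mbox{d} m$ from both sides — this is where the interplay between the normalization \pref{normaliz}, the balance condition, and the two-sided bound on $\mu$ is genuinely used. Everything else is a routine propagation of pointwise bounds through the equations.
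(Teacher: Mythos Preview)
Your plan has a genuine gap at the step you yourself flag as the ``main obstacle'': the lower bound on $\varphi_N$. Neither mechanism you propose delivers it. Jensen applied to $e^{-\varphi_N}$ gives $\int e^{-\varphi_N}\,dm_N\ge e^{-\int\varphi_N\,dm_N}$, but $\varphi_N$ is precisely the component that is \emph{not} normalized to have zero mean, so this yields nothing. And ``bounding the essential infimum via the equation with the just-obtained upper bound on $\varphi_N$'' does not work either: the equation $\varphi_N=\log\mu_N-\log\int_{X_{-N}}K\,e^{\sum_{j<N}\varphi_j}\,dm_{-N}$ shows that a lower bound on $\varphi_N$ requires an \emph{upper} bound on $\prod_{j<N}\int_{X_j}e^{\varphi_j}\,dm_j$, whereas Jensen only gives the lower bound $\ge 1$ for each factor. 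Your bootstrap step then inherits the same problem: you need a lower bound on $\int_{X_N}e^{\varphi_N}\,dm_N$ to bound the integrals for $i<N$ from below, and this is exactly what is missing.

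The paper sidesteps the whole difficulty by first shifting to the alternative normalization $\int_{X_i}e^{\psi_i}\,dm_i=1$ for $i<N$ (with $\psi_i=\varphi_i+\lambda_i$, $\sum\lambda_i=0$), as in the proof of Lemma~\ref{proper}. With this choice the product $\prod_{j<N}\int e^{\psi_j}\,dm_j$ equals $1$ \emph{exactly}, so the $N$-th equation immediately yields two-sided bounds on $\psi_N$; the remaining $\psi_i$ then follow, and finally the $\lambda_i$'s are bounded since $\lambda_i=\int\psi_i\,dm_i$ for $i<N$. An alternative fix, closer in spirit to your attempt, is to first observe that for each $i<N$ the equation gives $|\varphi_i(x_i)-\varphi_i(y_i)|\le 2\log M+\log(\ess K/\ei K)$ (the $x_i$-dependence of the integral is only through $K$), and then the zero-mean condition forces $\|\varphi_i\|_\infty$ to be bounded; only afterwards does one read off the bound on $\varphi_N$ from its equation.
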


\begin{proof}
Let $\mu\in F_{++, M}$ and $\varphi=S(\mu)$, as in the proof of Lemma \ref{proper} we introduce  constants $\lambda_i$ with zero sum such that $\mu=T(\psi)$ with $\psi_i=\varphi_i+\lambda_i$ is normalized by \pref{altnormaliz} (instead of \pref{normaliz}). Using the fact that $K$ is bounded and bounded away from $0$, that $M^{-1}  \le \mu_N \le  M$, \pref{altnormaliz} and $\mu_N=T_N(\psi)$ gives upper and lower bounds on $e^{\psi_N}$ i.e. an $L^{\infty}$ bound (depending on $M$ and $K$ only) on $\psi_N$.  This bound and  $\mu_i =T_i(\psi)$ in turn provide $L^{\infty}$ bounds on $\psi_i$ for $i=1, \ldots, N-1$. Finally, we get bounds on the constants $\lambda_i$ since $\lambda_i=\int_{X_i} \psi_i \mbox{d} m_i$ for $i=1, \ldots, N-1$ and $\lambda_N=-\sum_{i=1}^{N-1} \lambda_i$.  This gives the desired bounds on $\varphi=S(\mu)$.

\end{proof}

More interesting in possible applications, is the Lipschitz behavior of $S$ given by the folllowing 

\begin{thm}\label{lipS}
For every $M \ge 1$ there is a constant $C_M$, such that\footnote{In formulas \pref{l2lip} (respectively \pref{linflip}) $L^2$ (resp. $L^{\infty}$) is a abbreviated notation for $\prod_{i=1}^N L^{2}(X_i, \FF_i, m_i)$ (resp. $\prod_{i=1}^N L^{\infty}(X_i, \FF_i, m_i))$.} for every $\mu$ and  $\nu$ in $F_{++, M}$, there holds
\begin{equation}\label{l2lip}
\Vert S(\mu)-S(\nu)\Vert_{L^2} \le C_{M} \Vert \mu-\nu \Vert_{L^2},
\end{equation}
and
\begin{equation}\label{linflip}
\Vert S(\mu)-S(\nu)\Vert_{L^{\infty}} \le C_{M}   \Vert \mu-\nu  \Vert_{L^{\infty}}.
\end{equation}
\end{thm}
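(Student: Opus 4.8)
The plan is to derive the Lipschitz estimates from the invertibility of the derivative $T'(\varphi)$ established in Proposition \ref{local}, together with the uniform a priori bound of Lemma \ref{collombpd}. The key point is to get a \emph{uniform} bound on the operator norm of $T'(\varphi)^{-1}$ (and of $\tT'(\varphi)^{-1}$) as $\varphi$ ranges over the bounded set $S(F_{++,M})$, both as operators on $L^2$ and on $L^\infty$; once this is in hand, the estimates \pref{l2lip} and \pref{linflip} follow by writing $S(\mu)-S(\nu) = \int_0^1 \frac{d}{dt} S(\nu + t(\mu-\nu))\, dt$ and using $DS = (T' \circ S)^{-1}$ along the segment $[\nu,\mu]$, which stays in $F_{++,M}$ by convexity.

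First I would work at the level of $\tT' = \id + L$ on $L^2 = \prod_i L^2(X_i,\FF_i,m_i)$. Recall from the proof of Proposition \ref{local} that $L = L_\varphi$ is given by conditional expectations against the measure $Q_\varphi$, and that on the relevant subspace $\ker(\tT'(\varphi))$ is the line of constants-summing-to-zero, which is transverse to $E$. The crucial quantitative input is a spectral gap: I claim there is $\delta_M > 0$ such that for all $\varphi \in S(F_{++,M})$ and all $h \in E \cap L^2$ (or rather $h$ orthogonal in a suitable sense to the kernel line), one has $\|\tT'(\varphi)(h)\|_{L^2} \ge \delta_M \|h\|_{L^2}$. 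To see this I would revisit the computation in Proposition \ref{local}: for general $h$, expanding $\int_X (\sum_i h_i)^2 \, dQ_\varphi$ exactly as there gives
\[
\int_X \Big(\sum_{i=1}^N h_i(x_i)\Big)^2 dQ_\varphi(x) = \sum_{i=1}^N \int_{X_i} h_i^2 \, dQ_\varphi^i + \sum_{i\ne j}\int_X h_i h_j \, dQ_\varphi,
\]
and one checks that $\langle \tT'(\varphi)h, h\rangle_{Q_\varphi}$-type quantities control $\|h\|^2$ modulo the kernel; since $Q_\varphi$ has density bounded above and below by constants depending only on $M$ and $K$ (because $\varphi$ is bounded by $R_M$ and $K \in L^\infty_{++}$), the weighted $L^2$ norm $\|\cdot\|_{Q_\varphi}$ and the ordinary $L^2$ norm are equivalent with constants depending only on $M$. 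This upgrades the qualitative injectivity of Proposition \ref{local} to the uniform bound $\|\tT'(\varphi)^{-1}\|_{L^2 \to L^2} \le C_M$ on $E$; via \pref{manu} and the bounds on $e^{\tT_i(\varphi)}$ the same holds for $T'(\varphi)^{-1} : F \cap L^2 \to E \cap L^2$, giving \pref{l2lip}.

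For \pref{linflip} I would bootstrap: the relation $h = \tT'(\varphi)(h) - L(h)$ shows $\|h\|_{L^\infty} \le \|\tT'(\varphi)(h)\|_{L^\infty} + \|L(h)\|_{L^\infty}$, and $L$ maps $L^2$ into $L^\infty$ continuously with norm controlled by $\|K\|_\infty$ and the lower bound on $\int K e^{\sum\varphi_j}\,dm$ (hence by $M$), so $\|L(h)\|_{L^\infty} \lesssim_M \|h\|_{L^2} \lesssim_M \|\tT'(\varphi)(h)\|_{L^2} \lesssim_M \|\tT'(\varphi)(h)\|_{L^\infty}$ using the $L^2$ estimate just proved (after reducing to $h \in E$ by subtracting constants, which does not change $\tT'(\varphi)(h)$). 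This yields the uniform bound $\|\tT'(\varphi)^{-1}\|_{L^\infty \to L^\infty} \le C_M$, and again \pref{manu} transfers it to $T'(\varphi)^{-1}$, giving \pref{linflip} after integrating along the segment.

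The main obstacle I anticipate is the spectral gap / uniform invertibility step: one must be careful that the $N-1$-dimensional kernel is handled cleanly (the normalization in $E$ does this, but quantifying the transversality uniformly in $\varphi$ needs the equivalence of $\|\cdot\|_{Q_\varphi}$ with $\|\cdot\|_{L^2}$), and one must ensure all constants genuinely depend only on $M$, $K$ and the fixed data $(X_i,\FF_i,m_i)$, $N$ — which they do, precisely because Lemma \ref{collombpd} confines $S(F_{++,M})$ to a fixed ball. Everything else (the fundamental-theorem-of-calculus argument, convexity of $F_{++,M}$, the $L^2 \to L^\infty$ smoothing of $L$) is routine.
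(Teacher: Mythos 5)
Your proposal is correct and follows essentially the same route as the paper: a uniform bound on $\Vert [T'(\varphi)]^{-1}\Vert$ over $\varphi\in S(F_{++,M})$ obtained by redoing the quadratic identity from Proposition \ref{local} quantitatively (using Lemma \ref{collombpd} to get the two-sided comparison of $Q_\varphi$ with $m$, and the normalization in $E$ to identify $\int_X(\sum_j h_j)^2\,dm$ with $\Vert h\Vert_{L^2}^2$), followed by the $L^2\to L^\infty$ smoothing of $L$ to bootstrap the $L^\infty$ bound, and the mean-value inequality along the segment in the convex set $F_{++,M}$. No gaps worth flagging.
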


\begin{proof}
Let $\mu \in F_{++,M}$ and $\varphi=S(\mu)\in E$, our aim is to estimate the operator norm of $S'(\mu)=[T'(\varphi)]^{-1}$ (first in $L^2$ and then in $L^{\infty}$). Let  $\theta\in F$ and $h=S'(\mu)\theta$ i.e. $T'(\varphi)h=\theta$ which can be rewritten as
\begin{equation}
\tT'_i(\varphi) h=\tTet_i \mbox{ with } \tTet_i:=\frac{\theta_i}{\mu_i}.
\end{equation}

Defining the measure $Q_{\varphi}$ by \pref{defQfi} and disintegrating it with respect to its $i$-th marginal as in \pref{dezinglrm} in the proof of proposition \ref{local} gives that for every $i$ and $m_i$-a.e. $x_i$ one has
\begin{equation}\label{macronconnar}
\tTet_i(x_i)= h_i(x_i) + \int_{X_{-i}}  \Big( \sum_{j\neq i}  h_j(x_j) \Big) Q^{-i}_{\varphi}( \mbox{d} x_{-i} \vert x_i).
\end{equation}
We then argue in a similar way as we did in the proof of proposition \ref{local},  multiplying \pref{macronconnar} by $h_i$ and integrating with respect to $Q_{\varphi}^i$ and summing over $i$, we obtain
\begin{equation}\label{controll2}
\sum_{i=1}^N \int_{X_i} \tTet_i (x_i)h_i(x_i) \mbox{d} Q^i_{\varphi} (x_i)= \int_{X} \Big( \sum_{j=1}^N  h_j(x_j) \Big)^2 \mbox{d} Q_{\varphi} (x). 
\end{equation}
Next we observe that thanks to the fact that $\mu\in F_{++,M}$, the upper and lower bounds on $K$ and Lemma \ref{collombpd} there is a constant $\nu_M\ge 1$ such that 
\begin{equation}\label{equivprob}
 \frac{ m}{\nu_M}  \le  Q_{\varphi} \le \nu_M m, \;  \frac{ m_i}{\nu_M} \le  Q^i_{\varphi} \le \nu_M m_i.
\end{equation}
Using the fact that $\Vert \tTet_i \Vert_{L^2(X_i, \FF_i, m_i)} \le M \Vert \theta_i \Vert_{L^2(X_i, \FF_i, m_i)}$, \pref{equivprob} and Cauchy-Schwarz inequality, we deduce from \pref{controll2} that there is a constant $C_M$ such that
\begin{equation}
\int_{X} \Big( \sum_{j=1}^N  h_j(x_j) \Big)^2 \mbox{d} m (x) \le C_M \sum_{i=1}^N \Vert \theta_i \Vert_{L^2(X_i, \FF_i, m_i)}  \Vert h_i \Vert_{L^2(X_i, \FF_i, m_i)}.
\end{equation}
Finally recall that since $h\in E$ we have
\[\int_{X} \Big( \sum_{j=1}^N  h_j(x_j) \Big)^2 \mbox{d} m (x)= \sum_{j=1}^N \int_{X_j} h_j^2(x_j) \mbox{d} m_j(x_j) =:\Vert h\Vert^2_{L^2}\]
hence 
\begin{equation}\label{brizitapoil}
 \Vert h \Vert_{L^2}=\Vert S'(\mu) \theta \Vert_{L^2} \le C_M \Vert  \theta \Vert_{L^2} \mbox{ i.e. } \sup_{\mu\in F_{++, M}}  \Vert S'(\mu)\Vert_{\Lin(L^2)} \le C_M.
\end{equation}
By the mean-value inequality \pref{brizitapoil} immediately gives the Lipschitz in $L^2$ estimate \pref{l2lip}.

\smallskip

As for a bound on the operator norm of $S'(\mu)$ in $L^\infty$ , we first observe that for some positive constant $\lambda_M$ we have $Q_{\varphi}^{-i} \le \lambda_M m_{-i}$, so that \pref{macronconnar} gives
\[\begin{split}
 \Vert h_i\Vert_{L^{\infty}} &\le \Vert \tTet_i\Vert_{L^{\infty}(m_i)}  +  \lambda_M \sum_{j\ne i} \int_{X_j} \vert h_j(x_j)\vert \mbox{d} m_j(x_j) \\
& \le  M   \Vert \theta_i\Vert_{L^{\infty}(m_i)}+ \lambda_M \sqrt{N} \Vert h\Vert_{L^2} \\
 & \le M   \Vert \theta_i\Vert_{L^{\infty}(m_i)}+ \lambda_M   \sqrt{N} C_M \Vert \theta \Vert_{L^2}\\
 & \le C'_M \Vert \theta\Vert_{L^{\infty}}
 \end{split}\]
where we have used Cauchy-Schwarz inequality in the second line and \pref{brizitapoil} in the third one. This clearly implies \pref{linflip}. 

\end{proof}




 
 
{ \bf{Acknowledgments:}}   G.C. is grateful to the Agence Nationale de la Recherche for its support through the project MAGA (ANR-16-CE40-0014).

\bibliographystyle{plain}

\bibliography{bibli}

\end{document}